\def\th@plain{%
  \itshape 
}
\renewenvironment{proof}[1][\proofname]{\par
  \pushQED{\qed}%
  \normalfont \topsep6\p@\@plus6\p@\relax
  \trivlist
  \item[\hskip\labelsep
        \bfseries
    #1\@addpunct{.}]\ignorespaces
}{%
  \popQED\endtrivlist\@endpefalse
}
\numberwithin{equation}{section}
\newtheorem{thm}{Theorem}[section]
\newtheorem{prop}{Proposition}
\newtheorem{clm}{Claim}
\numberwithin{equation}{section}
\numberwithin{equation}{section}
\begin{document}

\title{\LARGE Equitable tree-$O(d)$-coloring of $d$-degenerate graphs
\thanks{Supported by the National Natural Science Foundation of China (No.\,11871055) and the Youth Talent Support Plan of Xi'an Association for Science and Technology (No.\,2018-6).}
\thanks{Mathematics Subject Classification (2010): 05C15.}
}
\author{Xin Zhang~\thanks{Corresponding author. Emails: xzhang@xidian.edu.cn (X.\,Zhang) beiniu@stu.xidian.edu.cn (B.\,Niu).}\,\, \,\,\,\,Bei Niu\\
{\small School of Mathematics and Statistics, Xidian University, Xi'an, Shaanxi, 710071, China}\\
}

\maketitle

\begin{abstract}\baselineskip 0.60cm
An equitable tree-$k$-coloring of a graph is a vertex coloring on $k$ colors so that every color class incudes a forest and the sizes of any two color classes differ by at most one.
This kind of coloring was first introduced in 2013 and can be
used to formulate the structure decomposition problem on the communication network with some security considerations.
In 2015, Esperet, Lemoine and Maffray showed that every $d$-degenerate graph admits an equitable tree-$k$-coloring for every $k\geq 3^{d-1}$. Motivated by this result, we attempt to
lower their exponential bound to a linear bound. Precisely,
we prove that every $d$-degenerate graph $G$ admits an equitable tree-$k$-coloring for every $k\geq \alpha d$
provided that $|G|\geq \beta \Delta(G)$, where $(\alpha,\beta)\in \{(8,56), (9,26), (10,18), (11,15), (12,13), (13,12),$\\$ (14,11), (15,10), (17,9), (20,8), (27,7), (52,6)\}$.

\noindent \emph{Keywords: equitable coloring; tree coloring; degenerate graph}.
\end{abstract}

\baselineskip 0.60cm

\section{Introduction }

A graph $G$ is \emph{$d$-degenerate} if every subgraph of $G$ contains a vertex of degree at most $d$.
A \emph{tree-$k$-coloring} of a graph $G$ is a function $c: V(G)\rightarrow \{1,2,\cdots,k\}$ so that $c^{-1}(i)$, the \emph{color class} of $c$ with color $i$, induces a forest for each integer $1\leq i\leq k$. If $c$ is a
tree-$k$-coloring so that $\big||c^{-1}(i)|-|c^{-1}(j)|\big|\leq 1$, then we call $c$ \emph{equitable}. Equivalently, an equitable tree-$k$-coloring is a tree-$k$-coloring so that the size of each color class is at most $\lceil |G|/k \rceil$.
The notion of the equitable tree-$k$-coloring was introduced by Wu, Zhang and Li \cite{WZL.2013}.

In 2015, Esperet, Lemoine and Maffray \cite{ELM.2015} proved that every planar graph has an equitable tree-$k$-coloring for every $k\geq 4$, answering a conjecture of Wu, Zhang and Li \cite{WZL.2013}. Esperet, Lemoine and Maffray \cite{ELM.2015} also proved that every $d$-degenerate graph admits an equitable tree-$k$-coloring for every $k\geq 3^{d-1}$.

In 2017, Chen et al.\,\cite{CGSWW.2017} showed that every $5$-degenerate graph with maximum degree $\Delta$ has an equitable tree-$k$-coloring for every $k\geq (\Delta+1)/2$, partially solving another conjecture of Wu, Zhang and Li \cite{WZL.2013} which states that every graph with maximum degree $\Delta$ has an equitable tree-$k$-coloring for every $k\geq (\Delta+1)/2$. Very recently, Zhang et al.\,\cite{ZNLL.2019} generalized this result by proving that every $d$-degenerate graph with maximum degree $\Delta$ has an equitable tree-$k$-coloring for every $k\geq (\Delta+1)/2$ if $\Delta\geq 10d$.

Motivated by the above results, our goal of this paper is to find for $d$-degenerate graphs
equitable tree-colorings using $O(d)$ colors. The following theorem, which improves the result of Esperet, Lemoine and Maffray \cite{ELM.2015} mentioned above, and partially improves (when $\Delta$ and $|G|$ is sufficient large) the result of Chen et al.\,\cite{CGSWW.2017}, is the main theorem of this paper.

\begin{thm}\label{main-thm}
  Let $\alpha$ and $\beta$ be integers and let $G$ be a $d$-degenerate graph with maximum degree at most $\Delta$ and order $n\geq \beta \Delta$. If $k\geq \alpha d$, then $G$ is equitably tree-$k$-colorable should $\alpha$ and $\beta$ take values from the following table:
  \begin{center}
  \begin{tabular}{c c|c c|c c}
    \hline
    $\alpha$ & $\beta$ & $\alpha$ & $\beta$ & $\alpha$ & $\beta$ \\\hline
    8 & 56 & 12 & 13 & 17 & 9 \\
    9 & 26 & 13 & 12 & 20 & 8 \\
    10 & 18 & 14 & 11 & 27 & 7 \\
    11 & 15 & 15 & 10 & 52 & 6 \\
    \hline
  \end{tabular}
  \end{center}
\end{thm}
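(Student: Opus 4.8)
The plan is to use a discharging-free, direct inductive/greedy construction on the vertices ordered by degeneracy, combined with a careful accounting that keeps every color class small. Fix $(\alpha,\beta)$ from the table and set $k=\lceil\alpha d\rceil$ (it suffices to treat the smallest admissible $k$, since adding colors only helps). Since $G$ is $d$-degenerate, fix a degeneracy ordering $v_1,\dots,v_n$ so that each $v_i$ has at most $d$ neighbors among $v_1,\dots,v_{i-1}$; call these the \emph{back-neighbors} of $v_i$. We build the coloring in rounds, processing vertices in this order, maintaining the invariant that at every step each color class is a forest and no class has exceeded the target size $\lceil n/k\rceil$. The key local fact is that when we reach $v_i$, it has at most $d$ back-neighbors, so at most $d$ colors are ``dangerous'' in the sense of possibly creating a cycle through $v_i$; with $k\ge\alpha d$ and $\alpha\ge 8$ there is a large surplus of safe colors. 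The hypothesis $n\ge\beta\Delta$ is what guarantees that among these safe colors many are also \emph{non-full}: a full color class has size $\ge\lceil n/k\rceil\ge n/k\ge \beta\Delta/(\alpha d)\ge(\beta/\alpha)\Delta$, and since each vertex has degree at most $\Delta$, the back-neighbors of $v_i$ plus their coloring can block only $O(\Delta)$ colors at a time, so choosing $\beta$ large enough relative to $\alpha$ (exactly as in the table) leaves a usable color.

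More precisely, I would split the argument into the following steps. \textbf{Step 1 (setup and counting).} Record that a color $c$ forbidden for $v_i$ either (a) appears on two back-neighbors lying in the same tree of class $c$ (a ``tree-conflict'') or (b) is already full. Bound the number of type-(a) colors by $d$ (at most one conflict per back-neighbor pair chain, using that $v_i$ has $\le d$ back-neighbors) and the number of type-(b) colors by $n/\lceil n/k\rceil \le k$ in the worst case — but then refine this using the $n\ge\beta\Delta$ bound to show that \emph{at the moment $v_i$ is processed} only few classes can be full because the total number of colored vertices so far is small, or alternatively run a two-phase/deferred scheme. \textbf{Step 2 (the main coloring lemma).} Show that one can always extend the partial coloring to $v_i$ with a color that is neither a tree-conflict nor full, provided $k\ge\alpha d$ and $n\ge\beta\Delta$; the inequality relating $\alpha,\beta,d,\Delta$ that makes this go through is precisely what forces the twelve table entries (larger $\alpha$ permits smaller $\beta$, and the trade-off is roughly $\alpha\cdot(\text{something like }\beta-c)\ge(\text{const})$). \textbf{Step 3 (equitability balancing).} After the greedy pass every class is a forest of size $\le\lceil n/k\rceil$, but some classes may be underfull; finish by a swap/rebalancing argument — move vertices from large classes to small ones along ``augmenting'' moves that preserve acyclicity — using again the surplus of colors guaranteed by $\alpha\ge 8$. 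Here one repeatedly picks an underfull class and an overfull class and transfers a vertex whose recoloring does not create a cycle, which exists because the transferred vertex has $\le d$ neighbors in the target class and $k$ is large.

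\textbf{Main obstacle.} The hard part is Step 2 combined with the equitability requirement: a naive greedy pass makes the classes forests but says nothing about their sizes, so the real work is to interleave the ``avoid a cycle'' constraint with the ``keep every class below $\lceil n/k\rceil$'' constraint. The cleanest way is probably to maintain, throughout the pass, the stronger invariant that the class sizes are as balanced as possible among the colors not yet forbidden, which means that when many colors become full simultaneously one must argue (using $n\ge\beta\Delta$, hence $n/k$ large compared to $\Delta\ge\deg(v_i)$) that the $\le d$ tree-conflict colors together with the at most $\approx d$ colors that are ``nearly full and risky'' still leave at least one admissible color out of the $k\ge\alpha d$ total. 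Making this counting tight enough to hit $\alpha=8$ with $\beta=56$ at one extreme and $\alpha=52$ with $\beta=6$ at the other is where the delicate case analysis — and the choice of the twelve pairs — really lives; I expect to need a slightly different balancing rule or an extra ``reservoir of colors'' trick in the large-$\alpha$ regime than in the small-$\alpha$ regime.
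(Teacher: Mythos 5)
There is a genuine gap, and you have located it yourself in your ``Main obstacle'' paragraph without resolving it: the whole difficulty of the theorem is that when you reach $v_i$ in a greedy pass, \emph{all} of the $k-\lfloor d/2\rfloor$ colors that are safe for acyclicity may simultaneously be full, and no single-vertex swap or local rebalancing is guaranteed to exist. Your Step 2 asserts that ``only few classes can be full because the total number of colored vertices so far is small,'' but that is false in general: near the end of the pass almost every class is full. Your Step 3 rebalancing likewise assumes one can always transfer a vertex from a full class to an underfull class without creating a cycle ``because $k$ is large,'' but a single transferred vertex may have two neighbors in every underfull class; one needs chains of moves, not single moves. The paper handles exactly this situation with an auxiliary digraph on color classes (an edge $\mathcal{X}\mathcal{Y}$ when some vertex of $\mathcal{X}$ has at most one neighbor in $\mathcal{Y}$) and ``switching witnesses'' along a directed path into a non-full class --- the Kostochka--Nakprasit--Pemmaraju technique --- together with a counting argument showing that the set $\mathfrak{Y}$ of classes reachable in this digraph from the non-full classes must contain a class with at most one neighbor of $v_i$. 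None of this machinery, nor a substitute for it, appears in your proposal.

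The second missing ingredient is the global structure that makes the counting close. The paper does \emph{not} run one greedy pass over a degeneracy ordering; it first disposes of the case $t\leq\beta(2-1/\alpha)$ directly (this is roughly your argument, and it only works for small $t$), and otherwise partitions $V(G)$ into layers $\mathcal{C}_1,\dots,\mathcal{C}_{m+1}$ whose sizes are governed by the ternary expansion of $t$, with per-layer size caps $L_i=\lceil\frac{2\alpha-3}{2\alpha-5}\ell_i\rceil$ satisfying $L_{i-1}/L_i\leq 1/2$. Crucially, each layer's core $\mathcal{A}_i$ is chosen by repeatedly extracting a vertex of \emph{maximum} degree in the remaining graph, which forces the residual maximum degrees $\Delta_i$ to decay fast enough (Proposition 2.2: $\ell_1\Delta+\sum(\ell_{i+1}-\ell_i)\Delta_i\leq 2\Delta+\tfrac{10}{3}dt$), and vertices with at least $(2\alpha-4)d$ back-neighbors are absorbed early into sets $\mathcal{B}_i$. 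The twelve $(\alpha,\beta)$ pairs come out of the final inequality \eqref{kd} that balances these layer-by-layer estimates; your heuristic ``$\alpha\cdot(\beta-c)\geq\text{const}$'' does not reproduce them. So the proposal correctly frames the problem but omits the two ideas (witness-switching recoloring chains and the degree-ordered layered decomposition) that constitute the actual proof.
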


Most of the notions and notations in this paper follow from \cite{Bondy.2008}. For example, ${\rm deg}_G(v)$ denotes
the degree of $v$ in a graph $G$, i.e, the number of neighbors of $v$ in $G$, and $e(\mathcal{U},\mathcal{V})$ denotes the number of edges that have one end-vertex in the vertex set $\mathcal{U}$ and the other in the vertex set $\mathcal{V}$.

\section{The proof of Theorem \ref{main-thm}}

In a $d$-degenerate graph $G$, a \emph{degenerate ordering} $v_1,v_2,\cdots,v_n$ of $V(G)$ is a vertex sequence so that $v_i$ has at most $d$ neighbors among $\{v_1,\cdots,v_{i-1}\}$. We use $G_i$ to denote the subgraph of $G$ induced by $\{v_1,\cdots,v_{i}\}$.

Let $t$ be an integer such that $k(t-1)<n\leq kt$. We claim that it is possible to color $v_1,v_2,\cdots,v_n$ of $V(G)$ in a degenerate ordering when $t\leq \beta (2-1/\alpha)$ so that at any stage

$(*)$ every color class induces a forest and contains at most $t$ vertices.

Suppose that we are coloring $v_i$. Let $S_2$ be the set of color classes of $G_{i-1}$ that contains at least two neighbors of $v_i$, and let $S_1$ be the set of other color classes. Clearly, $|S_2|\leq d/2$ and $|S_1|\geq k-d/2$. If there is a color class in $S_1$ containing at most $t-1$ vertices, then move $v_i$ into this color class and we win. Hence we assume that the size of every color class in $S_1$ is exactly $t$. Let $\mathcal{W}_1$ be the set of vertices belonging to some color class of $S_1$. So $|\mathcal{W}_1|=|S_1|t\geq (k-d/2)t$.

Since $|G_{i-1}|<|G|\leq kt$, there is a color class $\mathcal{M}_2\in S_2$ so that $|\mathcal{M}_2|\leq t-1$. If there is a vertex in some color class $\mathcal{M}_1\in S_1$ so that it has at most one neighbor in $\mathcal{M}_2$, then move this vertex into $\mathcal{M}_2$ and then move $v_i$ into $\mathcal{M}_1$. This constructs a coloring of $G_i$ satisfying $(*)$ and we win. Therefore, we are left the case that for any vertex in $\mathcal{W}_1$, it has at least two neighbors in $\mathcal{M}_2$. This implies that
$$\Delta(t-1)\geq |\mathcal{M}_2|\Delta\geq e(\mathcal{M}_2,\mathcal{W}_1)\geq 2|\mathcal{W}_1|\geq (2k-d)t.$$
Since $k\geq \alpha d$ and $n\leq kt$, $(2k-d)t\geq (2-1/\alpha)n$. So
$$t-1\geq \frac{(2k-d)t}{\Delta}\geq \bigg(2-\frac{1}{\alpha}\bigg)\frac{n}{\Delta}\geq \beta\bigg(2-\frac{1}{\alpha}\bigg)$$
by $n\geq \beta \Delta$, a contradiction.

Hence in what follows we always assume that
\begin{align}\label{t}
  t\geq \beta \bigg(2-\frac{1}{\alpha}\bigg)=\frac{(2\alpha-1)\beta}{\alpha}.
\end{align} 
Let
$$t=3^m \omega_1+3^{m-1}\omega_2+\cdots+\omega_{m+1}$$
and
$$\ell_0=0,~~\ell_i=3^{i-1} \omega_1+3^{i-2}\omega_2+\cdots+\omega_{i},~i=1,2,\cdots,m+1$$
where $m$ is an integer and $\omega_1,\omega_2,\cdots,\omega_{m+1}$ are integers chosen from $\{0,1,2\}$ (in particular, $\omega_1\neq 0$).
Actually, those constants $m$ and $\omega_i$'s come from the 3-ary representations of $t$. It is easy to see that
\begin{align}\label{ell}
  \ell_{m+1}=t,~~~~~\ell_{i}=3\ell_{i-1}+\omega_i,~i=1,2,\cdots,m+1
\end{align}

In order to construct an equitable tree-$k$-coloring of $G$, we partition $V(G)$ into $m+1$ disjoint subsets $\mathcal{C}_1,\cdots,\mathcal{C}_m,\mathcal{C}_{m+1}$ so that $\mathcal{C}_i=\mathcal{A}_i\cup \mathcal{B}_i$ for each $1\leq i\leq m$, where $\mathcal{A}_i$ and $\mathcal{B}_i$ are iteratively defined as follows.

Initially we let $\mathcal{A}_0=\mathcal{B}_0=\mathcal{C}_0=\emptyset$.
Suppose that $\mathcal{C}_0,\cdots,\mathcal{C}_{i-1}$ have been constructed. By $H_i$ we denote the subgraph of $G$ induced by $\cup_{j=0}^i \mathcal{C}_j$. For convenience, we write $V(H_i)$, i.e, $\cup_{j=0}^i \mathcal{C}_j$, as $\mathcal{H}_i$.

Arrange the vertices of $G-H_{i-1}$ in a sequence $v^i_1,v^i_2,\cdots,v^i_{n-|H_{i-1}|}$ so that $v^i_j$ has the maximum degree in the graph induced by $V(G)-\mathcal{H}_{i-1}-\{v^i_1,\cdots,v^i_{j-1}\}$. Let
\begin{align}\label{Ai}
 \mathcal{A}_i=\{v^i_1,v^i_2,\cdots,v^i_{(\ell_i-\ell_{i-1})k}\}.
\end{align}

Next, we add vertices into $\mathcal{B}_i$ (starting from $\emptyset$) one by one. Let $\hat{\mathcal{B}}_i$ be the set of vertices that are selected for $\mathcal{B}_i$ so far. If
 there is a vertex in the graph induced by $V(G)-\mathcal{H}_{i-1}-\mathcal{A}_i-\hat{\mathcal{B}}_i$ so that it has at least $(2\alpha-4) d$ neighbors in $\mathcal{H}_{i-1}\cup \mathcal{A}_i\cup \hat{\mathcal{B}}_i$, then put it into $\mathcal{B}_i$, update $\hat{\mathcal{B}}_i$ and repeat this procedure until we cannot find such a vertex.

By the above constructions of $\mathcal{A}_i,\mathcal{B}_i$ and $\mathcal{C}_i$,
\begin{align}
  \label{Hii} |H_i|&=\ell_i k+|\mathcal{B}_1|+\cdots+|\mathcal{B}_i|,\\
  \label{EHi-l}|E(H_i)|&\geq (2\alpha-4)d(|\mathcal{B}_1|+\cdots+|\mathcal{B}_i|).
\end{align}
Since $H_i$ is $d$-degenerate,
\begin{align}\label{EHi-u}
  |E(H_i)|< d|H_i|=d(\ell_i k+|\mathcal{B}_1|+\cdots+|\mathcal{B}_i|).
\end{align}
Hence by \eqref{Hii}, \eqref{EHi-l} and \eqref{EHi-u},
\begin{align}\label{Hi}
|\mathcal{B}_1|+\cdots+|\mathcal{B}_i| <\frac{1}{2\alpha-5}\ell_i k,~~~~~
|H_i|<\frac{2\alpha-4}{2\alpha-5}\ell_i k.
\end{align}

\begin{prop}\label{prop2}
  For $1\leq i\leq m$, if $\Delta_i$ is the maximum degree of the graph $G-H_i$, then
$$\ell_1\Delta+(\ell_2-\ell_1)\Delta_1+(\ell_3-\ell_2)\Delta_2+\cdots+
(\ell_{m+1}-\ell_m)\Delta_m\leq 2\Delta+\frac{10}{3}dt.$$
\end{prop}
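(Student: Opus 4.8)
\emph{Proof plan.}\quad The idea is to splice the greedy maximum‑degree orderings used to define the $\mathcal{A}_i$'s into one long non‑increasing degree sequence and to bound each $\Delta_i$ by a block average of it. For $1\le i\le m$ and $j\ge 1$, let $d^i_j$ be the degree of $v^i_j$ in the graph induced by $V(G)-\mathcal{H}_{i-1}-\{v^i_1,\dots,v^i_{j-1}\}$. By the choice of the sequence $d^i_1\ge d^i_2\ge\cdots$, each $d^i_j\le\Delta$, and $d^{i+1}_1=\Delta(G-H_i)=\Delta_i$. Note $\ell_i>\ell_{i-1}$ for every $i\ge1$ (since $\omega_1\neq0$ and by \eqref{ell}), and $|\mathcal{A}_i|=(\ell_i-\ell_{i-1})k$ by \eqref{Ai}; put $B_i:=\sum_{j=1}^{(\ell_i-\ell_{i-1})k}d^i_j$, the number of edges removed when $\mathcal{A}_i$ is peeled off $G-H_{i-1}$.

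The leading term is trivial, $\ell_1\Delta=\omega_1\Delta\le2\Delta$, so it remains to bound $\sum_{i=1}^m(\ell_{i+1}-\ell_i)\Delta_i$. Since $G-H_i$ is an induced subgraph of $G-H_{i-1}-\mathcal{A}_i$, whose maximum degree is $d^i_{(\ell_i-\ell_{i-1})k+1}$ (this entry exists because $\mathcal{A}_i$ does not exhaust $G-H_{i-1}$), and the $d^i_j$ are non‑increasing, we get $\Delta_i\le d^i_{(\ell_i-\ell_{i-1})k+1}\le \frac{B_i}{(\ell_i-\ell_{i-1})k}$, hence $(\ell_{i+1}-\ell_i)\Delta_i\le\frac1k w_iB_i$ with $w_i:=\frac{\ell_{i+1}-\ell_i}{\ell_i-\ell_{i-1}}=3+\frac{\omega_{i+1}-\omega_i}{\ell_i-\ell_{i-1}}$ by \eqref{ell}.

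Two facts about the $B_i$'s finish the job. (a) Peeling $\mathcal{A}_1,\mathcal{B}_1,\mathcal{A}_2,\dots,\mathcal{A}_m,\mathcal{B}_m$ off $G$ in this order destroys $|E(G)|-|E(G-H_m)|$ edges in total, of which $\sum_i B_i$ come from the $\mathcal{A}_i$'s; as $G$ is $d$‑degenerate and $n\le kt$, this gives $\sum_{i=1}^m B_i\le |E(G)|\le dn\le dkt$. (b) The last entry of block $i$ satisfies $d^i_{(\ell_i-\ell_{i-1})k}\ge\Delta_i=d^{i+1}_1\ge d^{i+1}_j$ for all $j$ — here one uses that deleting the vertices of $\mathcal{B}_i$ between the two stages can only lower degrees — so the average of block $i$ is at least that of block $i+1$; that is, $c_i:=\frac{B_i}{\ell_i-\ell_{i-1}}$ is non‑increasing in $i$. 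Now write $w_iB_i=3B_i+(\omega_{i+1}-\omega_i)c_i$ and sum by parts: from $c_i\ge0$, $c_{i-1}\ge c_i$ and $\omega_j\in\{0,1,2\}$ one obtains $\sum_{i=1}^m(\omega_{i+1}-\omega_i)c_i\le(2-\omega_1)c_1=(2-\omega_1)B_1/\omega_1$, and since $B_1=\sum_{p=1}^{\omega_1 k}d^1_p\le\omega_1 k\Delta$ this is at most $(2-\omega_1)k\Delta$. Therefore
\[
\sum_{i=1}^m(\ell_{i+1}-\ell_i)\Delta_i\le\frac1k\Bigl(3\sum_{i=1}^m B_i+(2-\omega_1)k\Delta\Bigr)\le 3dt+(2-\omega_1)\Delta,
\]
and adding $\ell_1\Delta=\omega_1\Delta$ yields the total bound $2\Delta+3dt\le2\Delta+\tfrac{10}{3}dt$.

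The one genuinely delicate point is fact (b): one must check carefully that the greedy orderings of consecutive stages concatenate into a single non‑increasing sequence \emph{despite} the removal of $\mathcal{B}_i$ in between, so that the per‑block averages $c_i$ are really monotone. This monotonicity is exactly what lets summation by parts collapse the coefficients $w_i$ — which individually can be as large as $4$ for the first block or two — into an $O(\Delta)$ overhead; without it one only recovers $4dt$ in place of $3dt$. Everything else is elementary bookkeeping with \eqref{ell}, \eqref{Ai}, and $|E(G)|\le dn$.
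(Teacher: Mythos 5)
Your proof is correct, and it reaches the stated bound (in fact the slightly stronger $2\Delta+3dt$) by a genuinely different route from the paper's, although both start from the same foundation: peel the $\mathcal{A}_i$'s greedily by maximum degree and charge the degrees against $|E(G)|\le dn\le dkt$. The paper aggregates immediately, using that \emph{every} entry of block $i$ is at least $\Delta_i$ to get $\sum_{i=1}^m(\ell_i-\ell_{i-1})\Delta_i<dt$ (its inequality \eqref{bas}); it then multiplies through by the uniform ratio bound $\frac{\ell_{i+1}-\ell_i}{\ell_i-\ell_{i-1}}\le\frac{10}{3}$ (valid only for $i\ge 3$) and repairs the first three terms by an explicit case analysis on $(\omega_1,\omega_2,\omega_3)$ showing the correction term $\xi\le 2\Delta$. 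You instead keep the per-block totals $B_i$ separate, bound $\Delta_i$ by the block \emph{average} $B_i/((\ell_i-\ell_{i-1})k)$, and — this is your extra lemma, which the paper has no analogue of — verify that the block averages are non-increasing because the last (smallest) entry of block $i$ dominates the first (largest) entry of block $i+1$ even after $\mathcal{B}_i$ is deleted in between; Abel summation then collapses all the $\omega$-fluctuations into the single overhead $(2-\omega_1)c_1\le(2-\omega_1)k\Delta$, with the main coefficient staying at exactly $3$. What your approach buys is the cleaner constant ($3dt$ versus $\frac{10}{3}dt$) and the elimination of the case analysis; what it costs is the monotonicity check in your fact (b), which you correctly identify as the delicate point and which does go through since $G-H_i$ is an induced subgraph of each intermediate graph in stage $i$. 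All the supporting steps (each edge counted at most once across the $B_i$'s, $w_i=3+\frac{\omega_{i+1}-\omega_i}{\ell_i-\ell_{i-1}}$ from \eqref{ell}, $B_1\le\omega_1k\Delta$) check out.
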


\begin{proof}
By the choice of $A_i$ and the definition of $\Delta_i$, we conclude
\begin{align*}
  |E(G)|&\geq \sum_{i=1}^m\sum_{j=1}^{(\ell_i-\ell_{i-1})k} {\rm deg}_{G-G[\mathcal{H}_{i-1}\cup \{v^i_1,\cdots,v^i_{j-1}\}]}v^i_j\\
&\geq \ell_1 k \Delta_1+(\ell_2-\ell_1)k\Delta_2+(\ell_3-\ell_2)k\Delta_3+\cdots+(\ell_m-\ell_{m-1})k\Delta_m.
\end{align*}
Since $G$ is $d$-degenerate, $|E(G)|<dn\leq dkt$. Hence
\begin{align}\label{bas}
  \ell_1  \Delta_1+(\ell_2-\ell_1)\Delta_2+(\ell_3-\ell_2)\Delta_3+\cdots+(\ell_m-\ell_{m-1})\Delta_m<dt.
\end{align}
If $i\geq 3$, then by \eqref{ell} (here note that $\omega_{i+1}\leq 2$, $\omega_i\geq 0$ and $\ell_{i-1}\geq 3$)
\begin{align*}
  \frac{\ell_{i+1}-\ell_i}{\ell_i-\ell_{i-1}}=\frac{3\ell_{i}+\omega_{i+1}-\ell_i}{3\ell_{i-1}+\omega_i-\ell_{i-1}}
\leq \frac{2(3\ell_{i-1}+\omega_i)+2}{2\ell_{i-1}+\omega_i}=3+\frac{2-\omega_i}{2\ell_{i-1}+\omega_i}\leq 3+\frac{1}{\ell_{i-1}}\leq \frac{10}{3}.
\end{align*}
Hence by \eqref{bas}, we have
\begin{align*}
  \notag \frac{10}{3}dt &> \frac{10}{3}\ell_1  \Delta_1+\frac{10}{3}(\ell_2-\ell_1)\Delta_2+
\frac{10}{3}\bigg((\ell_3-\ell_2)\Delta_3+\cdots+(\ell_m-\ell_{m-1})\Delta_m\bigg)\\
\notag&\geq \frac{10}{3}\ell_1\Delta_1+\frac{10}{3}(\ell_2-\ell_1)\Delta_2+(\ell_4-\ell_3)\Delta_3+\cdots+(\ell_{m+1}-\ell_{m})\Delta_m\\
\notag&= \ell_1\Delta+(\ell_2-\ell_1)\Delta_1+(\ell_3-\ell_2)\Delta_2+(\ell_4-\ell_3)\Delta_3+\cdots+(\ell_{m+1}-\ell_m)\Delta_m\\
&~~~~~~~~~~~~~~~~~~~~~~~~~~~~~
-\bigg(\ell_1\Delta+(\ell_2-\frac{13}{3}\ell_1)\Delta_1+(\ell_3-\frac{13}{3}\ell_2+\frac{10}{3}\ell_1)\Delta_2\bigg)
\end{align*}
Now, it is sufficient to prove that
\begin{align}\label{equse}
 \xi:=\ell_1\Delta+(\ell_2-\frac{13}{3}\ell_1)\Delta_1+(\ell_3-\frac{13}{3}\ell_2+\frac{10}{3}\ell_1)\Delta_2\leq 2\Delta.
\end{align}
Since $\ell_1=\omega_1$, $\ell_2=3\ell_1+\omega_2=3\omega_1+\omega_2$ and $\ell_3=3\ell_2+\omega_3=9\omega_1+3\omega_2+\omega_3$ by \eqref{ell},
$$3\xi=3\omega_1\Delta+(3\omega_2-4\omega_1)\Delta_1+(3\omega_3-4\omega_2-2\omega_1)\Delta_2.$$

Recall that $\Delta\geq \Delta_1\geq \Delta_2$ and $\omega_i\in \{0,1,2\}$.

Suppose first that $3\omega_2-4\omega_1\geq 0$.
If $3\omega_3-4\omega_2-2\omega_1\geq 0$, then
$3\xi\leq 3\omega_1\Delta+(3\omega_2-4\omega_1)\Delta+(3\omega_3-4\omega_2-2\omega_1)\Delta=(3\omega_3-3\omega_1-\omega_2)\Delta\leq 6\Delta$. If $3\omega_3-4\omega_2-2\omega_1< 0$, then $3\xi\leq 3\omega_1\Delta+(3\omega_2-4\omega_1)\Delta=(3\omega_2-\omega_1)\Delta\leq 6\Delta$.

Suppose, on the other hand, that $3\omega_2-4\omega_1<0$. If $3\omega_3-4\omega_2-2\omega_1\leq 0$, then $3\xi\leq 3\omega_1\Delta\leq 6\Delta$. If $3\omega_3-4\omega_2-2\omega_1>0$, then
$3\xi\leq 3\omega_1\Delta+(3\omega_2-4\omega_1)\Delta_1+(3\omega_3-4\omega_2-2\omega_1)\Delta_1
=3\omega_1\Delta+(3\omega_3-\omega_2-6\omega_1)\Delta_1$. If $3\omega_3-\omega_2-6\omega_1\leq 0$, we then have
$3\xi\leq 3\omega_1\Delta\leq 6\Delta$. If  $3\omega_3-\omega_2-6\omega_1\geq 0$, then
$3\xi\leq 3\omega_1\Delta+(3\omega_3-\omega_2-6\omega_1)\Delta=(3\omega_3-3\omega_1-\omega_2)\Delta\leq 6\Delta$.

Therefore, in each case we conclude that $3\xi\leq 6\Delta$, and \eqref{equse} holds.
\end{proof}

In what follows, we color the vertices of $\mathcal{C}_1,\mathcal{C}_2,\cdots,\mathcal{C}_m,\mathcal{C}_{m+1}$ in such a sequence.

\begin{clm}\label{clm1}
  We can tree-$k$-color the vertices of $\mathcal{C}_1$ in a degenerate ordering so that each color class contains at most
$\lceil \frac{2\alpha-3}{2\alpha-5}\ell_1\rceil$ vertices.
\end{clm}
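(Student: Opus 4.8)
The plan is to color $\mathcal{C}_1=V(H_1)$ greedily along a degenerate ordering of $H_1$, running essentially the argument that established property $(*)$ earlier in this section, but now exploiting the fact that, by \eqref{Hi}, $H_1$ has strictly fewer than $\frac{2\alpha-4}{2\alpha-5}\ell_1 k$ vertices while we are permitted to fill each of the $k$ colour classes up to $\lceil\frac{2\alpha-3}{2\alpha-5}\ell_1\rceil$. The gap between the coefficients $\frac{2\alpha-4}{2\alpha-5}$ and $\frac{2\alpha-3}{2\alpha-5}$ will turn out to be wide enough to absorb the classes that a vertex can ``block'', so the greedy assignment never fails and the vertex-swapping maneuver from the $(*)$-argument is not needed.

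Concretely, write $\tau:=\lceil\frac{2\alpha-3}{2\alpha-5}\ell_1\rceil$. Since $H_1\subseteq G$ is $d$-degenerate, I would fix an ordering $u_1,\dots,u_{|H_1|}$ of $\mathcal{C}_1$ in which each $u_j$ has at most $d$ neighbours among $u_1,\dots,u_{j-1}$, and colour the $u_j$ in this order, keeping the invariant that every colour class induces a forest with at most $\tau$ vertices. When $u_j$ is to be coloured, partition the $k$ classes into the set $S_2$ of those already containing two or more neighbours of $u_j$ and the set $S_1$ of the rest; as $u_j$ has at most $d$ already-coloured neighbours, $|S_2|\le d/2$ and $|S_1|\ge k-d/2$.

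The one computation to carry out is that $k\ge\alpha d$ forces $(k-\tfrac d2)\,\tau>|H_1|$: from $k\ge\alpha d$ one gets $k\ge\tfrac{(2\alpha-3)d}{2}$, i.e.\ $(k-\tfrac d2)(2\alpha-3)\ge(2\alpha-4)k$, hence $(k-\tfrac d2)\tau\ge(k-\tfrac d2)\tfrac{2\alpha-3}{2\alpha-5}\ell_1\ge\tfrac{2\alpha-4}{2\alpha-5}\ell_1 k>|H_1|$ by \eqref{Hi}. Since only $j-1<|H_1|$ vertices have been coloured so far, the classes in $S_1$ cannot all have already reached size $\tau$; I would pick $\mathcal{M}\in S_1$ with $|\mathcal{M}|\le\tau-1$ and place $u_j$ into $\mathcal{M}$. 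Because $\mathcal{M}\in S_1$, $u_j$ has at most one neighbour in $\mathcal{M}$, so $G[\mathcal{M}\cup\{u_j\}]$ is still a forest of order at most $\tau$; the invariant is preserved, and iterating over all $u_j$ produces the required tree-$k$-colouring of $\mathcal{C}_1$ in a degenerate ordering with every class of size at most $\tau$.

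The only place where anything must be checked is that displayed inequality chain — i.e.\ that the slack built into $\tau$ dominates the $\le d/2$ classes that $u_j$ may block — and once this is done the claim follows immediately; the substantive difference from the $(*)$-argument is that here the first-choice greedy assignment always succeeds, so no recolouring is involved.
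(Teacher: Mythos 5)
Your proof is correct and follows essentially the same route as the paper's: greedily color $\mathcal{C}_1$ in a degenerate ordering, note that at most $\lfloor d/2\rfloor$ classes are blocked by having two colored neighbours, and use the bound $|H_1|<\frac{2\alpha-4}{2\alpha-5}\ell_1 k$ from \eqref{Hi} together with $k\geq\alpha d$ to show some unblocked class still has room below $\lceil\frac{2\alpha-3}{2\alpha-5}\ell_1\rceil$. The only difference is cosmetic: you verify $(k-\tfrac d2)(2\alpha-3)\geq(2\alpha-4)k$ directly, while the paper routes the same comparison through the factor $\frac{k-\lfloor d/2\rfloor}{k}\cdot\frac{2\alpha}{2\alpha-1}\geq 1$.
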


\begin{proof}

Since
\begin{align*}
  \frac{k-\lfloor\frac{d}{2}\rfloor}{k}\frac{2\alpha}{2\alpha-1}\geq \frac{2k-d}{k}\frac{\alpha}{2\alpha-1}
  \geq \frac{2\alpha d-d}{\alpha d}\frac{\alpha}{2\alpha-1}=1,
\end{align*}
by \eqref{Hi}, we have
\begin{align*}
  |\mathcal{C}_1|=|H_1|<\frac{2\alpha-4}{2\alpha-5}\ell_1 k\leq \frac{2\alpha-4}{2\alpha-5}\ell_1 k \frac{k-\lfloor\frac{d}{2}\rfloor}{k}\frac{2\alpha}{2\alpha-1}
  &=\frac{2\alpha-4}{2\alpha-5}\frac{2\alpha}{2\alpha-1}\ell_1 \big(k-\lfloor\frac{d}{2}\rfloor\big)\\
  &<\frac{2\alpha-3}{2\alpha-5}\ell_1 \big(k-\lfloor\frac{d}{2}\rfloor\big).
\end{align*}
So, when each vertex $u\in \mathcal{C}_1$ is being colored,
there are at least $k-\lfloor d/2\rfloor$ color classes containing at most one neighbor of $u$, at least one of which contains less than $\frac{2\alpha-3}{2\alpha-5}\ell_1$ vertices. Hence we can put $u$ into such a color class and we win.
\end{proof}

%


\begin{clm}\label{clm2}
  Let $2\leq i\leq m+1$ be an integer and let $$L_i=\left\{
       \begin{array}{ll}
         \lceil \frac{2\alpha-3}{2\alpha-5}\ell_i\rceil , & \hbox{$1\leq i\leq m$;} \\
         t, & \hbox{$i=m+1$.}
       \end{array}
     \right.
$$
If $H_{i-1}$ has been tree-$k$-colored so that every color class of $H_{i-1}$ contains at most $L_{i-1}$
vertices, then it is possible to tree-$k$-color the vertices of $\mathcal{C}_i$ in a degenerate ordering so that

(1) every color class of $H_i$ contains at most $L_i$
vertices;

(2) no vertex in $H_{i-1}$ would be recolored.
  \end{clm}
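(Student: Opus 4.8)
The plan is to extend the given tree-$k$-colouring of $H_{i-1}$ to $H_i$ by colouring the vertices of $\mathcal{C}_i$ one at a time, never changing the colour of a vertex of $\mathcal{H}_{i-1}$, so that (2) holds automatically. When a vertex $u$ is reached I will put it into a colour class $C$ with $|C|<L_i$ and $|N(u)\cap C|\le 1$: then $C\cup\{u\}$ is still a forest of at most $L_i$ vertices and (1) is preserved. Writing $p(u)$ for the number of already-coloured neighbours of $u$ and $N_{\mathrm{full}}$ for the number of colour classes already holding $L_i$ vertices, such a $C$ exists whenever $k-\lfloor p(u)/2\rfloor>N_{\mathrm{full}}$, since each class meeting $N(u)$ in two or more vertices swallows at least two of the $p(u)$ neighbours. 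Thus the proof reduces to securing this inequality at every step, and everything comes down to the order in which $\mathcal{C}_i$ is coloured and to the bounds on $p(u)$ and $N_{\mathrm{full}}$.

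I would colour $\mathcal{C}_i$ so that the potentially expensive vertices come early, while $N_{\mathrm{full}}$ is still small: for $2\le i\le m$ first the vertices of $\mathcal{B}_i$ along a degenerate ordering of $G[\mathcal{B}_i]$, then the vertices of $\mathcal{A}_i$ along the maximum-degree ordering $v^i_1,v^i_2,\dots$ of \eqref{Ai}; for $i=m+1$ along a degenerate ordering of $G[\mathcal{C}_{m+1}]$. The key input for $p(u)$ is the termination of the selection process defining $\mathcal{B}_{i-1}$: when it stops, $\mathcal{H}_{i-1}$ is complete and no vertex of $V(G)-\mathcal{H}_{i-1}$ has as many as $(2\alpha-4)d$ neighbours in $\mathcal{H}_{i-1}$; so every vertex of $\mathcal{C}_i$ has fewer than $(2\alpha-4)d$ neighbours in $\mathcal{H}_{i-1}$. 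Hence $p(u)<(2\alpha-3)d$ for $u\in\mathcal{B}_i$ or $u\in\mathcal{C}_{m+1}$ (the degenerate ordering adds at most $d$ further neighbours), while for $u=v^i_j\in\mathcal{A}_i$ — coloured after all of $\mathcal{B}_i$ — the remaining coloured neighbours of $u$ all lie in $\mathcal{B}_i\cup\{v^i_1,\dots,v^i_{j-1}\}\subseteq N(v^i_j)\cap(V(G)-\mathcal{H}_{i-1})$, so $p(v^i_j)<(2\alpha-4)d+\deg_{G-H_{i-1}}(v^i_j)\le(2\alpha-4)d+\Delta_{i-1}$. For $N_{\mathrm{full}}$: since $\ell_i>\ell_{i-1}$ by \eqref{ell} we have $L_i>L_{i-1}$, so at the start of $\mathcal{C}_i$ no class is full; a class only becomes full after receiving $L_i-L_{i-1}$ of its vertices from $\mathcal{C}_i$, giving $N_{\mathrm{full}}\le s/(L_i-L_{i-1})$ when $s$ vertices of $\mathcal{C}_i$ are coloured, and in any case $N_{\mathrm{full}}\le|\mathcal{H}_i|/L_i<\tfrac{2\alpha-4}{2\alpha-3}k$ by \eqref{Hi}. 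The recursion $\ell_i=3\ell_{i-1}+\omega_i$ of \eqref{ell} is used to bound $L_i-L_{i-1}$ below in terms of $\ell_{i-1}$.

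Feeding these into $k-\lfloor p/2\rfloor>N_{\mathrm{full}}$ settles $\mathcal{B}_i$ and $\mathcal{C}_{m+1}$ quickly, using $k\ge\alpha d$, \eqref{Hi} and (for $\mathcal{B}_i$) $s<|\mathcal{B}_1|+\dots+|\mathcal{B}_i|<\tfrac{1}{2\alpha-5}\ell_ik$. The main obstacle is the $\mathcal{A}_i$-vertices: here $p(v^i_j)$ is governed by $\Delta_{i-1}$, which need not be $O(d)$, while $N_{\mathrm{full}}$ — controlled through $s/(L_i-L_{i-1})$ for small $s$ and through $\tfrac{2\alpha-4}{2\alpha-3}k$ for large $s$ — can be a constant fraction of $k$ for the later vertices. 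The two bounds for $N_{\mathrm{full}}$ must be balanced against each other (the crossover lies, by the design of $\mathcal{A}_i$ and $L_i$, near the end of $\mathcal{C}_i$), and, crucially, the sizes $\Delta_{i-1}$ cannot be handled one step at a time: one must control $\ell_1\Delta+\sum_{i=2}^{m+1}(\ell_i-\ell_{i-1})\Delta_{i-1}$ in aggregate, which is exactly what Proposition \ref{prop2} provides, namely $\le2\Delta+\tfrac{10}{3}dt$. Substituting this bound together with $n\ge\beta\Delta$ and $t\ge\frac{(2\alpha-1)\beta}{\alpha}$ (from \eqref{t}), one checks $k-\lfloor p/2\rfloor>N_{\mathrm{full}}$ in all cases; the final step $i=m+1$ goes the same way except that the target $L_{m+1}=t$ has less slack than $L_i$ for $i\le m$, so one uses $|G|\le kt$ in place of \eqref{Hi}. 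I expect this quantitative verification — tracking the ceilings in $L_i$ and the recursion for $\ell_i$ through a short case analysis — to be where each of the twelve pairs $(\alpha,\beta)$ in the table is both needed and enough, and hence the technical core of the proof.
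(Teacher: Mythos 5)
Your core placement rule --- put $u$ into a class $C$ with $|C|<L_i$ and $|N(u)\cap C|\le 1$, secured by the inequality $k-\lfloor p(u)/2\rfloor>N_{\mathrm{full}}$ --- cannot be made to work, and this is the genuine gap. Every vertex $u\in\mathcal{C}_i$ may have up to $(2\alpha-4)d-1$ already-coloured neighbours in $\mathcal{H}_{i-1}$ no matter how you order $\mathcal{C}_i$, so $p(u)$ is of order $(2\alpha-4)d$ for \emph{all} of $\mathcal{C}_i$, not just the ``expensive'' vertices; up to $\lfloor p(u)/2\rfloor\approx(\alpha-2)d$ classes can therefore hold two neighbours of $u$. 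Meanwhile your own bound $N_{\mathrm{full}}<\frac{2\alpha-4}{2\alpha-3}k$ leaves only about $\frac{k}{2\alpha-3}\ge\frac{\alpha d}{2\alpha-3}$ non-full classes, and $\frac{\alpha}{2\alpha-3}<\alpha-2$ for every $\alpha$ in the table (indeed for all $\alpha\ge 3$), so the bad classes can cover all the non-full ones. The situation is worse at the tail of $\mathcal{C}_{m+1}$: when the last vertices are coloured, as few as one class is below $t$, so $N_{\mathrm{full}}$ can equal $k-1$ and your inequality would force $p(u)\le 1$. No reordering fixes this (and note the claim as stated requires a degenerate ordering of $\mathcal{C}_i$, which your $\mathcal{B}_i$-then-$\mathcal{A}_i$ schedule is not).

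The missing idea is the augmenting-path mechanism the paper imports from Kostochka--Nakprasit--Pemmaraju: define a digraph $\mathfrak{D}$ on the colour classes with an arc $\mathcal{X}\mathcal{Y}$ whenever some $x\in\mathcal{X}-\mathcal{H}_{i-1}$ has at most one neighbour in $\mathcal{Y}$, and let $\mathfrak{Y}$ be the set of classes from which a non-full class in $Y_0$ is reachable. One then only needs \emph{some} class of $\mathfrak{Y}$ (possibly full) with at most one neighbour of $u$; placing $u$ there and shifting witnesses along the path to $Y_0$ restores all size bounds while recolouring no vertex of $H_{i-1}$. The contradiction when no such class exists is not your single inequality but a two-stage count: the quadratic $f(\gamma)=2\gamma^2-(2\alpha-1)\gamma+\alpha>0$ forces $y<\frac{\alpha}{2\alpha-3}d$, which for $2\le i\le m$ contradicts \eqref{Hi}, and for $i=m+1$ is played off against the double count of $\mathcal{U}$ using Proposition \ref{prop2}. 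You correctly identified the roles of Proposition \ref{prop2}, of the bound $p(u)<(2\alpha-3)d$, and of the $(\alpha,\beta)$ table in the final numerical check, but without the witness-switching digraph the argument does not close.
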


If Claim \ref{clm2} has been proved, then by Claims \ref{clm1} and \ref{clm2}, one can conclude that
all the vertices of $\mathcal{C}_1\cup \mathcal{C}_2\cup \cdots \cup \mathcal{C}_{m+1}=V(G)$ can be tree-$k$-colored so that every color class of $H_{m+1}=G$ contains at most $L_{m+1}=t$ vertices. This just gives an equitable tree-$k$-coloring of $G$ and we complete the proof.

Therefore, the remaining task is to confirm Claim \ref{clm2}.
Before proving it, we first show the following proposition.

\begin{prop}\label{prop1}
  For $2\leq i\leq m+1$, $$\frac{L_{i-1}}{L_i}\leq \frac{1}{2}.$$
\end{prop}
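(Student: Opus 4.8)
The plan is to unwind the definitions of $L_i$ and $\ell_i$ and reduce everything to the recursion \eqref{ell}. Set $c:=\frac{2\alpha-3}{2\alpha-5}=1+\frac{2}{2\alpha-5}$; since $2\alpha-5\geq 11$ for every $\alpha$ in the table we have $1<c<2$, and also $3-2c=\frac{2\alpha-9}{2\alpha-5}>0$. The inequality $L_{i-1}/L_i\leq\frac12$ is equivalent to $2L_{i-1}\leq L_i$, and I would prove this separately for $2\leq i\leq m$ (where $L_{i-1}=\lceil c\ell_{i-1}\rceil$ and $L_i=\lceil c\ell_i\rceil$) and for $i=m+1$ (where $L_m=\lceil c\ell_m\rceil$ and $L_{m+1}=t=\ell_{m+1}$). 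In both regimes the two basic inputs are the crude bound $2L_{i-1}=2\lceil c\ell_{i-1}\rceil<2c\ell_{i-1}+2$ and the fact that, by \eqref{ell}, the ``larger'' index satisfies $\ell_i=3\ell_{i-1}+\omega_i\geq 3\ell_{i-1}$ (resp. $\ell_{m+1}\geq 3\ell_m$).

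For $2\leq i\leq m$: from $\ell_i\geq 3\ell_{i-1}$ we get $L_i\geq c\ell_i\geq 3c\ell_{i-1}$, so it suffices that $2c\ell_{i-1}+2\leq 3c\ell_{i-1}$, i.e. $c\ell_{i-1}\geq 2$; this holds as soon as $\ell_{i-1}\geq 2$, because $c>1$. The only remaining possibility is $\ell_{i-1}=1$; since $\ell_j\geq 3\ell_{j-1}\geq 3$ for $j\geq 2$, this forces $i-1=1$ and $\omega_1=1$, hence $i=2$, $L_1=\lceil c\rceil=2$, and $\ell_2=3+\omega_2\geq 3$, so by monotonicity of the ceiling $L_2\geq\lceil 3c\rceil=4$ (using $3<3c<4$, which needs only $\alpha>5.5$); thus $2L_1=4\leq L_2$.

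For $i=m+1$ we have $L_{m+1}=t=3\ell_m+\omega_{m+1}\geq 3\ell_m$, so $2L_m<2c\ell_m+2$ is at most $t$ provided $2c\ell_m+2\leq 3\ell_m$, i.e. $\ell_m\geq\frac{2}{3-2c}=2+\frac{8}{2\alpha-9}$; since $\frac{8}{2\alpha-9}\leq\frac{8}{7}$ for $\alpha\geq 8$, it is enough to show $\ell_m\geq 4$. This is the only step that uses something beyond \eqref{ell}: a short check shows $\frac{(2\alpha-1)\beta}{\alpha}>11$ for every pair $(\alpha,\beta)$ in the table, so \eqref{t} gives $t\geq 12$ (in particular $m\geq 2$, so $\ell_m$ is defined), and then $t=\ell_{m+1}=3\ell_m+\omega_{m+1}\leq 3\ell_m+2$ yields $\ell_m\geq\frac{t-2}{3}\geq\frac{10}{3}$, hence $\ell_m\geq 4$.

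The main obstacle is precisely this boundary case $i=m+1$: unlike the generic step, the estimate $2L_m<2c\ell_m+2$ is not automatically below $\ell_{m+1}=3\ell_m+\omega_{m+1}$, and one must feed in the order hypothesis through \eqref{t} to force $\ell_m$ large enough. The remaining cases are routine once the problem is reduced to $\ell_i\geq 3\ell_{i-1}$, apart from the small-index anomaly $\ell_1=1$, which is dispatched by the explicit two-line computation above.
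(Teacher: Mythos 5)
Your proof is correct and follows essentially the same route as the paper's: the same case split ($2\le i\le m$ versus $i=m+1$, with the anomaly $\ell_{i-1}=1$ treated separately), the same key facts $\ell_i\ge 3\ell_{i-1}$ and the ceiling estimate, and the same use of the lower bound \eqref{t} on $t$ in the boundary case $i=m+1$ (which you package as $t\ge 12\Rightarrow \ell_m\ge 4$ where the paper manipulates the fractions directly). No gaps; the differences are only cosmetic.
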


\begin{proof}
Here we recall \eqref{ell} which states that $\ell_i=3\ell_{i-1}+\omega_i\geq 3\ell_{i-1}$ and $\ell_{m+1}=t$.

If $2\leq i\leq m$, then we consider two subcases.
If $\ell_{i-1}\geq 2$, then $\ell_i\geq 6$ and thus
\begin{align*}
\frac{L_{i-1}}{L_i}=\frac{\lceil \frac{2\alpha-3}{2\alpha-5}\ell_{i-1}\rceil}{\lceil \frac{2\alpha-3}{2\alpha-5}\ell_i\rceil}
&\leq \frac{\frac{2\alpha-3}{2\alpha-5}\ell_{i-1}
+\frac{2\alpha-6}{2\alpha-5}}{\frac{2\alpha-3}{2\alpha-5}\ell_i}\\
&\leq \frac{1}{3}+\frac{2\alpha-6}{6(2\alpha-3)}\\
&=\frac{3\alpha-6}{6\alpha-9}
<\frac{1}{2}.
\end{align*}
If $\ell_{i-1}=1$, then $\ell_{i}\geq 3$ and
\begin{align*}
\frac{L_{i-1}}{L_i}=\frac{\lceil \frac{2\alpha-3}{2\alpha-5}\ell_{i-1}\rceil}{\lceil \frac{2\alpha-3}{2\alpha-5}\ell_i\rceil}
\leq \frac{\lceil \frac{2\alpha-3}{2\alpha-5}\rceil}{\lceil 3\cdot \frac{2\alpha-3}{2\alpha-5}\rceil}\leq \frac{2}{4}=\frac{1}{2}.
\end{align*}

If $i=m+1$, then $\ell_i=t$, $\ell_m/t\leq 1/3$ and
\begin{align*}
\frac{L_{i-1}}{L_i}=\frac{\lceil \frac{2\alpha-3}{2\alpha-5}\ell_{i-1}\rceil}{t}
&\leq \frac{\frac{2\alpha-3}{2\alpha-5}\ell_{m}
+\frac{2\alpha-6}{2\alpha-5}}{t}\\
&\leq \frac{2\alpha-3}{3(2\alpha-5)}
+\frac{\alpha(2\alpha-6)}{\beta(2\alpha-5)(2\alpha-1)}\\
&=\frac{10\alpha^2-22\alpha+6}{24\alpha^2-72\alpha+30}<\frac{1}{2}
\end{align*}
Recall that $t\geq \beta (2\alpha-1)/\alpha$ by \eqref{t} and $\beta\geq 6$. 
\end{proof}

\begin{proof}[Proof of Claim \ref{clm2}]

We now attempt to tree-$k$-color the vertices of $\mathcal{C}_i$ in a degenerate ordering so that (1) and (2) hold. Suppose that we are coloring a vertex $u\in \mathcal{C}_i$ and let $c$ be the current partial coloring of $G$ we have already obtained so that every color class of $c$ contains at most $L_i$
vertices.

Define an auxiliary digraph $\mathfrak{D}:=\mathfrak{D}(c)$ on the color classes
of $c$ by $\mathcal{X}\mathcal{Y}\in E(\mathfrak{D})$ if and only if some vertex $x\in \mathcal{X}-\mathcal{H}_{i-1}$
has at most one neighbor in $\mathcal{Y}$. In this case we say that $x$
\emph{witnesses} $\mathcal{XY}$.
If $P:=\mathcal{M}_l\mathcal{M}_{l-1}\cdots \mathcal{M}_1\mathcal{M}_0$ is a path in $\mathfrak{D}$ and $x_i$ ($l\geq i\geq 1$) is a vertex in $\mathcal{M}_i$ such that $x_i$ witnesses $\mathcal{M}_i\mathcal{M}_{i-1}$, then \emph{switching witnesses} along $P$ means moving $x_i$ to $\mathcal{M}_{i-1}$ for every $l\geq i\geq 1$. This operation decreases
$|\mathcal{M}_l|$ by one and increases $|\mathcal{M}_0|$ by one, while leaving $|\mathcal{M}_i|$ with $2\leq i\leq l-1$ unchanged.

Let $Y_0$ be the set of color classes of $c$ containing less than $L_i$ vertices. By $Y_i$ $(i\geq 1)$, we denote the set of color classes of $c$ such that

(i) $Y_i\cap \bigcup_{j=0}^{i-1}Y_j=\emptyset$, and

(ii) for any color class $\mathcal{M}_i\in Y_i$ there exists a color class $\mathcal{M}_{i-1}\in Y_{i-1}$ so that $\mathcal{M}_{i}\mathcal{M}_{i-1}\in E(\mathfrak{D})$.

Let $\mathfrak{Y}=\bigcup Y_j$ and let $y=|\mathfrak{Y}|$. We claim that there exists one color class $\mathcal{M}_j\in Y_j\in \mathfrak{Y}$ containing at most one neighbor of $u$. Hence by moving $u$ into $\mathcal{M}_j$ and switching witnesses along $P=\mathcal{M}_j\mathcal{M}_{j-1}\cdots \mathcal{M}_0$, where $\mathcal{M}_t\in Y_t$ ($j\geq t\geq 0$), we can complete the coloring on $u$ so that (1) and (2) hold.

Suppose, to the contrary,  that every color class of $\mathfrak{Y}$ contains at least two neighbors of $u$. Note that any vertex $v\in \mathcal{C}_i$ in some color class outside of $\mathfrak{Y}$ has at least two neighbors in every color class of $\mathfrak{Y}$, because otherwise the color class containing $v$ will in included in $\mathfrak{Y}$. In the following, we try our luck to find contradictions under those assumptions.

We claim that $y$ is upper-bounded. Actually, there are less than $(2\alpha-4)d$ neighbors of $u$ in $H_{i-1}$ (otherwise $u$ would have already been selected for $\mathcal{B}_{i-1}$ and thus $u\in \mathcal{C}_{i-1}$), and in $\mathcal{C}_i$ there are at most $d$ neighbors of $u$ that are already colored (recall that the vertices of $\mathcal{C}_i$ are being colored in a degenerate ordering). Therefore, among the neighbors of $u$, less than $(2\alpha-3)d$ are colored under $c$. This implies that there are less than $(2\alpha-3)d/2$ color classes that contain at least two neighbors of $u$. Hence
\begin{align}\label{y-bonud-1}
  y<\frac{2\alpha-3}{2}d, ~~~~~~\frac{y}{d}<\frac{2\alpha-3}{2}.
\end{align}

Let $\mathcal{S}$ be the set of vertices that are contained in some color class of $\mathfrak{Y}$, and let $\mathcal{T}$
be the set of colored vertices in $\mathcal{C}_i$ that do not belong to $\mathcal{S}$.
By the $d$-degeneracy of $G$ and by the above analysis, we have
\begin{align*}
  d(|\mathcal{S}|+|\mathcal{T}|)> e(\mathcal{T},\mathcal{S})\geq 2y|\mathcal{T}|,
\end{align*}
which implies
\begin{align}\label{st}
  (2y-d)|\mathcal{T}|< d|\mathcal{S}|.
\end{align}

By the definition of $Y_0$, every color class of $Y_0$ contains less than $L_i$ vertices
and every other color class in $c$ contains exactly $L_i$ vertices.
Since no vertex in $H_{i-1}$ would be recolored when coloring $\mathcal{C}_i$,  every color class in $c$ has at most $L_{i-1}$ vertices in $H_{i-1}$. So
\begin{align*}
 |\mathcal{S}|\leq y L_i,~~|\mathcal{T}|\geq (k-y)L_i-(k-y)L_{i-1}=(k-y)(L_i-L_{i-1})\geq (\alpha d-y)(L_i-L_{i-1}),
\end{align*}
which implies by \eqref{st} that
\begin{align*}
  (2y-d)(\alpha d-y)(L_i-L_{i-1})<d y L_i.
\end{align*}
Write $\gamma=y/d$. We deduce from Proposition \ref{prop1} and the above inequality that
\begin{align*}
  f(\gamma):=2\gamma^2-(2\alpha-1)\gamma+\alpha>0.
\end{align*}
Since
\begin{align*}
  f\bigg(\frac{2\alpha-3}{2}\bigg)=3-\alpha<0,~~~~~
f\bigg(\frac{\alpha}{2\alpha-3}\bigg)=\frac{2\alpha(3-\alpha)}{(2\alpha-3)^2}<0,
\end{align*}
we conclude by \eqref{y-bonud-1} that
\begin{align}\label{y-bound-2}
  \frac{y}{d}=\gamma<\frac{\alpha}{2\alpha-3}.
\end{align}

We now count the number $\zeta_c$ of vertices that have already been colored under $c$. Actually, among the $k$ color classes, there are only $|Y_0|\leq y$ color classes containing less than $L_i$ vertices. Therefore, $\zeta_c\geq (k-y)L_i$.
Since $k\geq \alpha d$ and $y<\alpha d/(2\alpha-3)$ by \eqref{y-bound-2},
\begin{align*}
  \zeta_c\geq \frac{2\alpha-4}{2\alpha-3}k L_i= \frac{2\alpha-4}{2\alpha-3} k \bigg\lceil \frac{2\alpha-3}{2\alpha-5}\ell_i\bigg\rceil\geq \frac{2\alpha-4}{2\alpha-5}\ell_i k
\end{align*}
if $2\leq i\leq m$. On the other hand, it is trivial that $$\zeta_c\leq |H_i|<\frac{2\alpha-4}{2\alpha-5}\ell_i k$$ by \eqref{Hi}. This is a contradiction.

Hence there remains only one case: $i=m+1$.

Recall that we are now coloring a vertex $u\in \mathcal{C}_{m+1}$ and $c$ is the partial coloring of $G$ already constructed  with the property that every color class of $c$ contains at most $L_{m+1}=t$ vertices.
Since $|V(G)-\{u\}|<n\leq kt$, there is at least one color class in $c$ that contains less than $t$ vertices. This implies that $Y_0\neq \emptyset$.

Let $\mathcal{M}$ be a color class of $Y_0$. For $1\leq j\leq m$, let $\mathcal{Z}_j=\mathcal{M}\cap \mathcal{C}_j$ and $z_j=|\mathcal{Z}_j|$.
Since no vertex in $H_{j-1}$ would be recolored while coloring vertices of $\mathcal{C}_{j}$ and every color class of $H_{j-1}$ contains at most $L_{j-1}$ vertices,
\begin{align}\label{Z}
  \sum_{s=1}^j z_s\leq L_j, ~~ 1\leq j\leq m.
\end{align}.

Let $\mathcal{U}$ be the set of colored vertices in $\mathcal{C}_{m+1}$
that are adjacent to some vertex in $\mathcal{M}$. 

For $1\leq j\leq m$, recall that $\Delta_j$ is the maximum degree of the graph $G-H_j$.
It is easy to see that
\begin{align}\label{u}
  |\mathcal{U}|\leq z_1\Delta+z_2\Delta_1+\cdots+z_{m+1}\Delta_m
\end{align}
Since $kt/\beta\geq n/\beta\geq \Delta\geq \Delta_1\geq\cdots\geq \Delta_m$, by \eqref{Z}, \eqref{u} and Propostion \ref{prop2}, we have
\begin{align}
  \notag |\mathcal{U}|&\leq L_1\Delta+(L_2-L_1)\Delta_1+\cdots+(L_{m+1}-L_m)\Delta_m\\
\notag &= \bigg\lceil \frac{2\alpha-3}{2\alpha-5}\ell_1\bigg\rceil \Delta
+\bigg(\bigg\lceil \frac{2\alpha-3}{2\alpha-5}\ell_2\bigg\rceil-\bigg\lceil \frac{2\alpha-3}{2\alpha-5}\ell_1\bigg\rceil\bigg)\Delta_1
+\cdots+\bigg(t-\bigg\lceil \frac{2\alpha-3}{2\alpha-5}\ell_m\bigg\rceil\bigg)\Delta_m\\
\notag &=\bigg\lceil \frac{2\alpha-3}{2\alpha-5}\ell_1\bigg\rceil (\Delta-\Delta_1)
+\bigg\lceil \frac{2\alpha-3}{2\alpha-5}\ell_2\bigg\rceil (\Delta_1-\Delta_2)+\cdots
+\bigg\lceil \frac{2\alpha-3}{2\alpha-5}\ell_m\bigg\rceil (\Delta_{m-1}-\Delta_m)+t\Delta_m\\
\notag &\leq \bigg(\frac{2\alpha-3}{2\alpha-5}\ell_1+\frac{2\alpha-6}{2\alpha-5}\bigg) (\Delta-\Delta_1)
+\bigg(\frac{2\alpha-3}{2\alpha-5}\ell_2+\frac{2\alpha-6}{2\alpha-5}\bigg) (\Delta_1-\Delta_2)\\
\notag &~~~~~~~~~~~~~~~~~~~~~~~~~~~~~~~~~~~~~~+\cdots
+\bigg(\frac{2\alpha-3}{2\alpha-5}\ell_m+\frac{2\alpha-6}{2\alpha-5}\bigg) (\Delta_{m-1}-\Delta_m)+t\Delta_m\\
\notag&<  \frac{2\alpha-6}{2\alpha-5}\Delta
+\frac{2\alpha-3}{2\alpha-5}\bigg(\ell_1\Delta+(\ell_2-\ell_1)\Delta_1+(\ell_3-\ell_2)\Delta_2+\cdots+
(\ell_{m+1}-\ell_m)\Delta_m\bigg)\\
 \notag&\leq \frac{2\alpha-6}{2\alpha-5}\Delta+\frac{2\alpha-3}{2\alpha-5}\bigg(2\Delta+\frac{10}{3}dt\bigg)
=\frac{6\alpha-12}{2\alpha-5}\Delta+\frac{20\alpha-30}{6\alpha-15}dt\\
\label{upper}&\leq \bigg(\frac{6\alpha-12}{(2\alpha-5)\beta}k+\frac{20\alpha-30}{6\alpha-15}d\bigg)t.
\end{align}

On the other hand, recall that any vertex $v\in \mathcal{C}_{m+1}$ in some color class outside of $\mathfrak{Y}$ has at least two neighbors in every color class (e.g.,\,$\mathcal{M}$) of $\mathfrak{Y}$. Therefore, all vertices lying in the $(k-y)$ color classes outside of $\mathfrak{Y}$ are neighbors of $\mathcal{M}$. By the definition of $Y_0$, we also know that each of those $(k-y)$ color classes in $c$ contains exactly $L_{m+1}$ vertices. Since no vertex in $H_{m}$ would be recolored when coloring $\mathcal{C}_{m+1}$,  every color class in $c$ has at most $L_{m}$ vertices in $H_{m}$. Therefore,
\begin{align}
  \notag |\mathcal{U}|&\geq (k-y)(L_{m+1}-L_m)=(k-y)\bigg(t-\bigg\lceil \frac{2\alpha-3}{2\alpha-5}\ell_m\bigg\rceil\bigg)\\
\notag &\geq (k-y)\bigg(1-\frac{2\alpha-3}{2\alpha-5} \frac{\ell_m}{t} - \frac{2\alpha-6}{2\alpha-5} \frac{1}{t}  \bigg) t\\
\notag &\geq (k-y)\bigg(1-\frac{2\alpha-3}{6\alpha-15} - \frac{2\alpha-6}{2\alpha-5} \frac{\alpha}{(2\alpha-1)\beta}  \bigg) t\\
\label{lower} &\geq \bigg(k-\frac{\alpha}{2\alpha-3}d \bigg)\bigg(\frac{4\alpha-12}{6\alpha-15}-\frac{\alpha(2\alpha-6)}{(2\alpha-1)(2\alpha-5)\beta}\bigg)t
\end{align}
Here we use \eqref{y-bound-2} along with the facts that $\ell_m/t=\ell_m/\ell_{m_1}\leq 1/3$ and $t\geq (2\alpha-1)\beta/\alpha$.

Combining \eqref{lower} with \eqref{upper}, we immediately conclude
\begin{align*}
\bigg(k-\frac{\alpha}{2\alpha-3}d \bigg)\bigg(\frac{4\alpha-12}{6\alpha-15}-\frac{\alpha(2\alpha-6)}{(2\alpha-1)(2\alpha-5)\beta}\bigg)\leq
\frac{6\alpha-12}{(2\alpha-5)\beta}k+\frac{20\alpha-30}{6\alpha-15}d,
\end{align*}
which implies
\begin{align}
 \label{kd1} \frac{k}{d}&\leq \frac{\frac{20\alpha-30}{6\alpha-15}+\frac{\alpha(4\alpha-12)}{(2\alpha-3)(6\alpha-15)}-
  \frac{\alpha^2(2\alpha-6)}{(2\alpha-1)(2\alpha-3)(2\alpha-5)\beta}}
{\frac{4\alpha-12}{6\alpha-15}-\frac{\alpha(2\alpha-6)}{(2\alpha-1)(2\alpha-5)\beta}-\frac{6\alpha-12}{(2\alpha-5)\beta}}\\
\label{kd}&=\frac{(44\alpha^3-154\alpha^2+156\alpha-45)\beta-(3\alpha^3-9\alpha^2)}{(8\alpha^3-40\alpha^2+54\alpha-18)\beta-
(42\alpha^3-171\alpha^2+198\alpha-54)}
\end{align}
Note that the numerators and denominators in \eqref{kd1} and \eqref{kd} are positive if $\alpha$ and $\beta$ are chosen from the given table.

Since $k\geq \alpha d$, we deduce from \eqref{kd} that
\begin{align*}
  (8\alpha^4-84\alpha^3+208\alpha^2-174\alpha+45)\beta\leq 42\alpha^4-174\alpha^3+207\alpha^2-54\alpha.
\end{align*}
However, we will get a contradiction if we choose $\alpha$ and $\beta$ from the given table. This ends the proof of Claim \ref{clm2} and thus the whole proof of Theorem \ref{main-thm} completes.
\end{proof}

\section*{Acknowledgements}

We would like to thank the paper \cite{KNP.2005} contributed by Kostochka, Nakprasit and Pemmaraju, because the idea of the proof of Theorem \ref{main-thm} mainly comes from there. Perhaps, using our similar arguments that involve parameters $\alpha$ and $\beta$, one is possible to improve or generalize their early result in  \cite{KNP.2005}.

\bibliographystyle{srtnumbered}
\bibliography{mybib}

\end{document}